\tikzset{
  commutative diagrams/.cd,
  arrow style=tikz
%  ,diagrams={>=latex}
  }
\DeclareMathAlphabet{\mathcalligra}{T1}{calligra}{m}{n}
\DeclareMathAlphabet{\mathpzc}{OT1}{pzc}{m}{it}
\newtheorem{theoremABC}{Theorem}
\newtheorem{theorem}{Theorem}[section]
\newtheorem{lemma}[theorem]{Lemma}
\newtheorem{proposition}[theorem]{Proposition}
\theoremstyle{definition}
\newtheorem{remark}[theorem]{Remark}
\theoremstyle{remark}
\newcommand{\C}{{\mathbb{C}}}
\newcommand{\N}{{\mathbb{N}}}
\newcommand{\R}{{\mathbb{R}}}
\renewcommand{\SS}{{\mathbb{S}}}
\newcommand{\Z}{{\mathbb{Z}}}
\newcommand{\Dd}{{\mathcal{D}}}
\newcommand{\Ii}{{\mathcal{I}}}
\newcommand{\Ll}{{\mathcal{L}}}   % Lagrangian planes
\newcommand{\Mm}{{\mathcal{M}}}   % moduli space
\newcommand{\Nn}{{\mathcal{N}}}
\newcommand{\Rr}{{\mathcal{R}}}
\newcommand{\id}{{\rm id}}                % identity
\newcommand{\cgraph}[1]{\Gamma_{\kern-.5ex{}#1}}     % contact graph map
\newcommand{\Diff}{{\rm Diff}}        % Diffeomorphisms
\newcommand{\norm}{{\rm norm}}
\newcommand{\SC}{{\mathrm{sc}}}                  % sc
\newcommand{\SSC}{{\mathrm{ssc}}}               % ssc
\def\Nablatop#1{\nabla^{#1}\kern-.5ex{}}
\def\NABLA#1{{\mathop{\nabla\kern-.5ex\lower1ex\hbox{$#1$}}}}
\def\Nabla#1{\nabla\kern-.5ex{}_{#1}}
\def\Tabla#1{\Tilde\nabla\kern-.5ex{}_{#1}}
\def\Babla#1{\widebar\nabla\kern-.5ex{}_{#1}}
\def\abs#1{\mathopen|#1\mathclose|}   
\def\Abs#1{\left|#1\right|}            
\def\norm#1{\mathopen\|#1\mathclose\|}
\renewcommand{\Tilde}{\widetilde}
\newlength\eqshift
\renewcommand\theequation{\thesection.\arabic{equation}}
\let\savetheequation\theequation
\renewcommand*\env@matrix[1][\arraystretch]{%
  \edef\arraystretch{#1}%
  \hskip -\arraycolsep
  \let\@ifnextchar\new@ifnextchar
  \array{*\c@MaxMatrixCols c}}
\let\save@mathaccent\mathaccent
\newcommand*\if@single[3]{%
  \setbox0\hbox{${\mathaccent"0362{#1}}^H$}%
  \setbox2\hbox{${\mathaccent"0362{\kern0pt#1}}^H$}%
  \ifdim\ht0=\ht2 #3\else #2\fi
  }
\newcommand*\rel@kern[1]{\kern#1\dimexpr\macc@kerna}
\newcommand*\widebar[1]{\@ifnextchar^{{\wide@bar{#1}{0}}}{\wide@bar{#1}{1}}}
\newcommand*\wide@bar[2]{\if@single{#1}{\wide@bar@{#1}{#2}{1}}{\wide@bar@{#1}{#2}{2}}}
\newcommand*\wide@bar@[3]{%
  \begingroup
  \def\mathaccent##1##2{%
%Enable nesting of accents:
    \let\mathaccent\save@mathaccent
%If there's more than a single symbol, use the first character instead (see below):
    \if#32 \let\macc@nucleus\first@char \fi
%Determine the italic correction:
    \setbox\z@\hbox{$\macc@style{\macc@nucleus}_{}$}%
    \setbox\tw@\hbox{$\macc@style{\macc@nucleus}{}_{}$}%
    \dimen@\wd\tw@
    \advance\dimen@-\wd\z@
%Now \dimen@ is the italic correction of the symbol.
    \divide\dimen@ 3
    \@tempdima\wd\tw@
    \advance\@tempdima-\scriptspace
%Now \@tempdima is the width of the symbol.
    \divide\@tempdima 10
    \advance\dimen@-\@tempdima
%Now \dimen@ = (italic correction / 3) - (Breite / 10)
    \ifdim\dimen@>\z@ \dimen@0pt\fi
%The bar will be shortened in the case \dimen@<0 !
    \rel@kern{0.6}\kern-\dimen@
    \if#31
      \overline{\rel@kern{-0.6}\kern\dimen@\macc@nucleus\rel@kern{0.4}\kern\dimen@}%
      \advance\dimen@0.4\dimexpr\macc@kerna
%Place the combined final kern (-\dimen@) if it is >0 or if a superscript follows:
      \let\final@kern#2%
      \ifdim\dimen@<\z@ \let\final@kern1\fi
      \if\final@kern1 \kern-\dimen@\fi
    \else
      \overline{\rel@kern{-0.6}\kern\dimen@#1}%
    \fi
  }%
  \macc@depth\@ne
  \let\math@bgroup\@empty \let\math@egroup\macc@set@skewchar
  \mathsurround\z@ \frozen@everymath{\mathgroup\macc@group\relax}%
  \macc@set@skewchar\relax
  \let\mathaccentV\macc@nested@a
%The following initialises \macc@kerna and calls \mathaccent:
  \if#31
    \macc@nested@a\relax111{#1}%
  \else
%If the argument consists of more than one symbol, and if the first token is
%a letter, use that letter for the computations:
    \def\gobble@till@marker##1\endmarker{}%
    \futurelet\first@char\gobble@till@marker#1\endmarker
    \ifcat\noexpand\first@char A\else
      \def\first@char{}%
    \fi
    \macc@nested@a\relax111{\first@char}%
  \fi
  \endgroup
}
\def\XXint#1#2#3{{\setbox0=\hbox{$#1{#2#3}{\int}$}
     \vcenter{\hbox{$#2#3$}}\kern-.5\wd0}}
\long\def\symbolfootnote[#1]#2{\begingroup%
\def\thefootnote{\fnsymbol{footnote}}\footnote[#1]{#2}\endgroup}
\tikzset{
  symbol/.style={
    draw=none,
    every to/.append style={
      edge node={node [sloped, allow upside down, auto=false]{$#1$}}}
  }
}
\begin{document}
\sloppy
%\author{Urs Frauenfelder \qquad Joa Weber
%                 Instituto de Matem\'{a}tica, Estat\'{\i}stica
%                 e Computa\c{c}\~{a}o Scient\'{\i}fica \\
%                 Universidade Estadual de Campinas \\
%                 Rua S\'{e}rgio Buarque de Holanda~651,
%                 Cidade Universit\'{a}ria "Zeferino Vaz",
%                CEP~13083-859, Campinas-SP, Brasil.
           %     \\
%                 joa@math.sunysb.edu.
%              \\
%              Tel.: +55-19-3521xxxx\\
%              Fax: +55-19-3521xxxx\\
%            }

\author{\quad Urs Frauenfelder \quad \qquad\qquad
             Joa Weber\footnote{
  Email: urs.frauenfelder@math.uni-augsburg.de
  \hfill
%  joa-weber@protonmail.com
  joa@unicamp.br
%    joa@math.uni-bielefeld.de
%                 joa@math.sunysb.edu.
  }
%\footnote{
%        {\bf Financial support:}
%        Funda\c{c}\~{a}o de Amparo
%        \`{a} Pesquisa do Estado de S\~{a}o Paulo
%        (FAPESP), processo $\mathrm{n}^{\rm o}$ 2017/19725-6,
        %and CNPq, Conselho Nacional de Desenvolvimento Cient\'{\i}fico
        %e Tecnol\'ogico - Brasil.
        %Bolsista do CNPq - Brasil.
        %%%\hfill
        %  se publicado individualmente:
        % "O presente trabalho foi realizado com apoio do CNPq, Conselho Nacional de Desenvol          vimento Científico  e Tecnológico - Brasil".
        %  se publicado em co-autoria:
        % "Bolsista do CNPq - Brasil".
        %
        %%%\newline
%        {\bf Address:}
%        Instituto de Matem\'{a}tica, Estat\'{\i}stica
%        e Computa\c{c}\~{a}o Scient\'{\i}fica,
%        Universidade Estadual de Campinas,
%        Rua S\'{e}rgio Buarque de Holanda~651,
        %SP~13083-859 ,
%        Campinas, SP, Brasil.
        % MSC 37Dxx 58E05
        %
%        \hfill
%        joa@ime.unicamp.br
%  }
    \\
    Universit\"at Augsburg \qquad\qquad%\quad
    UNICAMP
}

\title{Loop space blow-up and scale calculus}

%\subtitle{-- Monograph --}  %%% Springer book style only
\date{\today}

%\begin{titlepage}
\maketitle %(to set the title page and copyright page; see note)
                 %\include files (e.g., preface, introduction)
%\thispagestyle{empty}
%\newpage
%
%{\color{red}
%  \subsection*{To do}
%  \begin{itemize}
%  \item
%    ...
%  \end{itemize}
%}

%\end{titlepage}

%%%%%%%%%%%%%%%%%%%%%%%%%%%%%%%%%%
%%%%%%%%%% FRONTMATTER %%%%%%%%%%%%%
%%%%%%%%%%%%%%%%%%%%%%%%%%%%%%%%%%
%\frontmatter
%\include{dedic}
%\include{foreword}
%\include{preface}
%\include{acknow}
%
%\tableofcontents
%
%\include{acronym}

%\frontmatter %• title page and copyright page information
%\include{0-dedic}
% dedication by hand:
%     \clearpage\thispagestyle{empty}
%      \par\vspace*{.35\textheight}{\centering Dedicated to ...\par}\clearpage
%
%\include{0-preface}

% ACKNOWLEDGEMENTS %
% * various anonymous referees?
% * DISCLAIMER: - references based on authors knowledge
%                        - more people contributed etc etc

%%%%%%%%%%%%%%%%%%%%%%%%%%%%%%%%%%
%%%%%%% main matter %%%%%%%%%%%%%%%%%%
%%%%%%%%%%%%%%%%%%%%%%%%%%%%%%%%%%
%\mainmatter
%\include{part}
%\include{chapter}
%\include{appendix}

%\mainmatter %\include files (e.g., main chapters, appendices)

% introduction
%\cleardoublepage
%\phantomsection
%\include{1_sc-smoothness}      % INTRODUCTION

%%%%%%%%%%%%%%%%%%%%%%%%%%%%%%%%%%%
%%%%%%% Abstract %%%%%%%%%%%%%%%%%%%%%
%%%%%%%%%%%%%%%%%%%%%%%%%%%%%%%%%%%
\begin{abstract}
In this note we show that the
Barutello-Ortega-Verzini regularization map
is scale smooth.
\end{abstract}

%\newpage
%\tableofcontents

%\newpage
%%%%%%%%%%%%%%%%%%%%%%%%%%%%%%%%%%%
%%%%%%%%%%%%%%%%%%%%%%%%%%%%%%%%%%%
%%%%%%% Section  %%%%%%%%%%%%%%%%%%%%%%
%%%%%%%%%%%%%%%%%%%%%%%%%%%%%%%%%%%
%%%%%%%%%%%%%%%%%%%%%%%%%%%%%%%%%%%
\subsection*{Introduction}

Regularization of two-body collisions
is an important topic in celestial mechanics
and the dynamics of electrons in atoms.
Most classical regularizations blow up the energy
hypersurfaces to regularize collisions.
Recently Barutello, Ortega, and Verzini~\cite{Barutello:2021b}
discovered a new regularization technique
which does not blow up the energy hypersurface,
but instead of that the loop space.
The discovery is explained in detail
in~\cite[\S 2.2]{Frauenfelder:2021e}
in case of the free fall.

This new regularization technique
is in particular useful for non-autonomous systems
for which there is no preserved energy and therefore
no energy hypersurface which can be blown up.

%%%%%%%%%%%%%%%%%%%%%%%%%%%%%%%%%%%
%%%%%%% Subsection  %%%%%%%%%%%%%%%%%%%
%%%%%%%%%%%%%%%%%%%%%%%%%%%%%%%%%%%
\subsection*{Setup and main result}

Let $\C^\times:=\C\setminus\{0\}$ be the punctured complex plane.
We abbreviate by
$$
   \Ll\C^\times:=C^\infty(\SS^1,\C^\times)
   ,\qquad
   \SS^1:=\R/\Z ,
$$
the space of smooth loops in the punctured plane.
Barutello-Ortega-Verzini regularization is carried out
using a map $\Rr\colon\Ll\C^\times\to\Ll\C^\times$
defined as follows.
For $z\in\Ll\C^\times$ define the map
$$
   t_z\colon\SS^1\to\SS^1
   ,\quad
   \tau\mapsto
   \tfrac{1}{\norm{z}_{L^2}^2}\int_0^\tau\Abs{z(s)}^2 ds\in[0,1] .
$$
Indeed it takes values in $\R/\Z$ since $t_z(0)=0$ and $t_z(1)=1$
coincide modulo $1$ which, by continuity of $t_z$, implies surjectivity.
The derivative
$$
   t_z^\prime(\tau)
   :=\tfrac{d}{d\tau} t_z(\tau)
   =\tfrac{1}{\norm{z}_{L^2}^2} \Abs{z(\tau)}^2
   >0
$$
depends continuously on $\tau$ and is anywhere strictly positive.
So $t_z$ is also injective. Therefore the inverse $\tau_z:={t_z}^{-1}$
exists and it is $C^1$; see~\cite[\S 2.2]{Frauenfelder:2021e}.
This shows that $t_z$ and $\tau_z$ are elements
of the circle's diffeomorphism group $\Diff(\SS^1)$.
Using this notion the \emph{Barutello-Ortega-Verzini map} or,
alternatively, the \textbf{rescale-square operation} is defined by
$$
   \Rr\colon \Ll\C^\times\to\Ll\C^\times
   ,\quad
   z\mapsto z^2\circ \tau_z
   =[t\mapsto z^2(\tau_z(t))] .
$$
The loop space $\Ll\C^\times$ is an open subset of the Fr\'{e}chet space
$\Ll\C=C^\infty(\SS^1,\C)$.
This Fr\'{e}chet space arises as the smooth level of the scale Hilbert space
$$
   \Lambda\C=(\Lambda\C_k)_{k\in\N_0}
   ,\qquad
   \Lambda\C_k:=W^{2+k,2}(\SS^1,\C),\quad k\in\N_0 .
$$
Indeed $\Lambda\C=\cap_{k\in\N_0}\Lambda\C_k$.
For details of scale Hilbert spaces and scale smoothness
($\SC^\infty$) see~\cite{Hofer:2021a};
for an introduction see~\cite{Weber:2019a},
for a summary of what we need here see~\cite{Frauenfelder:2021b}.

\smallskip
\noindent
The scale Hilbert space $\Lambda\C=W^{2,2}(\SS^1,\C)$
contains the open subset
$$
   \Lambda\C^\times:=W^{2,2}(\SS^1,\C^\times)
$$
which inherits the levels
$$
   \Lambda\C^\times_k
   :=\Lambda\C^\times\cap \Lambda\C_k
   =W^{2+k,2}(\SS^1,\C^\times)
   ,\quad k\in\N_0.
$$
Observe that $\Ll\C^\times=C^\infty(\SS^1,\C^\times)
=\cap_{k\in\N_0}\Lambda\C^\times_k$ is the smooth level of
$\Lambda\C^\times$.
The map $t_z$, hence $\Rr$, is well defined for
$z\in\Lambda\C^\times$.
In this note we prove

\begin{theoremABC}\label{thm:A}
The map $\Rr\colon\Lambda\C^\times\to\Lambda\C^\times$ is scale smooth.
\end{theoremABC}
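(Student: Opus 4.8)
The plan is to realize $\Rr$ as a composition of elementary scale smooth maps and then to invoke the chain rule for $\SC^\infty$ maps; see \cite{Hofer:2021a} and the summary in \cite{Frauenfelder:2021b}. I write $\Rr(z)=z^2\circ\tau_z$, where $\tau_z=t_z^{-1}$ is the inverse of $t_z$ in the circle's diffeomorphism group $\Diffplus(\SS^1)$, viewed as a scale manifold, and I split this into the three maps
\[
   z\longmapsto z^2,
   \qquad
   z\longmapsto t_z\longmapsto\tau_z ,
   \qquad
   (f,\varphi)\longmapsto f\circ\varphi .
\]
Since $\SC^\infty$ maps are stable under composition and under forming products and diagonals, it suffices to show that each of these three maps is $\SC^\infty$; note that the third one sends the relevant image into the open subset $\Lambda\C^\times\subset\Lambda\C$, because $z$, hence $z^2\circ\tau_z$, is nowhere zero.

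\emph{The first two maps.} Every level $\Lambda\C_k=W^{2+k,2}(\SS^1,\C)$ embeds continuously into $C^1(\SS^1,\C)$ and is a Banach algebra, so complex pointwise multiplication $\Lambda\C\times\Lambda\C\to\Lambda\C$ is a levelwise bounded bilinear map and hence $\SC^\infty$; precomposing with the diagonal shows $z\mapsto z^2$ is $\SC^\infty$. For $z\mapsto t_z$, I first observe that $t_z=\mathrm{id}_{\SS^1}+h_z$, where a short computation using periodicity of $\Abs{z}^2$ shows that
\[
   h_z=\tfrac{1}{\norm{z}_{L^2}^2}\,A\bigl(\Abs{z}^2\bigr),
   \qquad
   A(u)(\tau):=\int_0^\tau u(s)\,ds-\tau\int_0^1 u(s)\,ds ,
\]
is a $1$-periodic, real-valued function. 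Here $A$ is a bounded linear operator on the scale $\Lambda\C$ (in fact $A\colon\Lambda\C_k\to\Lambda\C_{k+1}$), hence $\SC^\infty$; the real-bilinear bounded map $(u,v)\mapsto u\bar v$ on $\Lambda\C$ is $\SC^\infty$, so $z\mapsto\Abs{z}^2=z\bar z$ is $\SC^\infty$; the integral $\int_0^1\colon\Lambda\C\to\C$ is bounded linear, hence $\SC^\infty$, so $z\mapsto\norm{z}_{L^2}^2=\int_0^1\Abs{z}^2\,ds$ is $\SC^\infty$ with values in $(0,\infty)$, whence $z\mapsto1/\norm{z}_{L^2}^2$ is $\SC^\infty$ on $\Lambda\C^\times$; and scalar multiplication $\R\times\Lambda\C\to\Lambda\C$ is bilinear and bounded. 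Combining these, $z\mapsto h_z$ is $\SC^\infty$ from $\Lambda\C^\times$ to $\Lambda\C$. Since $h_z'=\Abs{z}^2/\norm{z}_{L^2}^2-1>-1$ pointwise, the chart $\varphi\mapsto\varphi-\mathrm{id}_{\SS^1}$ identifies $\Diffplus(\SS^1)$ with the open subset $\{\,h\in\Lambda\C : h\text{ real-valued and }\min h'>-1\,\}$ of the real-valued loops in $\Lambda\C$; in this chart $z\mapsto t_z$ is exactly $z\mapsto h_z$, so $z\mapsto t_z$ is $\SC^\infty$ as a map into the scale manifold $\Diffplus(\SS^1)$.

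\emph{Inversion, reparametrization, and the main obstacle.} What remains — and where I expect essentially all of the real work to sit — are the two statements that group inversion $\Diffplus(\SS^1)\to\Diffplus(\SS^1)$, $\varphi\mapsto\varphi^{-1}$, and reparametrization $\Lambda\C\times\Diffplus(\SS^1)\to\Lambda\C$, $(f,\varphi)\mapsto f\circ\varphi$, are $\SC^\infty$. I would import both from \cite{Hofer:2021a,Frauenfelder:2021b}: they are the model cases of the loss-of-derivative phenomenon the scale formalism is designed to accommodate. For instance, the formal derivative of reparametrization, $(\hat f,\hat\varphi)\mapsto\hat f\circ\varphi+(f'\circ\varphi)\,\hat\varphi$, is bounded as a map $\Lambda\C_1\times W^{3,2}(\SS^1,\R)\to\Lambda\C_0$ but fails to be bounded $\Lambda\C_0\times W^{2,2}(\SS^1,\R)\to\Lambda\C_0$, and the same one-level shift governs inversion; this is precisely the behaviour captured by $\SC^1$-regularity.

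\emph{Assembly.} Granting the two facts of the previous paragraph, $z\mapsto\tau_z$ is $\SC^\infty$, being the composition of the $\SC^\infty$ maps $z\mapsto t_z$ and $\varphi\mapsto\varphi^{-1}$; hence so is $z\mapsto(z^2,\tau_z)$; and therefore $\Rr(z)=z^2\circ\tau_z$ is $\SC^\infty$, a composition of $\SC^\infty$ maps whose image lies in the open set $\Lambda\C^\times$. This proves Theorem~\ref{thm:A}.
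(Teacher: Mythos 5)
Your decomposition of $\Rr$ is exactly the paper's: squaring, the map $z\mapsto t_z$, inversion in the diffeomorphism group, and reparametrization, glued by the sc-chain rule. Your treatment of the first two pieces is correct (and your chart description of $\Diffplus(\SS^1)$ via $h_z=t_z-\id$ with $\min h_z'>-1$ is, if anything, more explicit than the paper's Lemma~\ref{le:t_z}, which just decomposes $t$ into levelwise smooth elementary maps).

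The gap is in the paragraph you yourself flag as containing ``essentially all of the real work.'' You propose to import sc-smoothness of both the reparametrization map $(f,\varphi)\mapsto f\circ\varphi$ and the inversion $\varphi\mapsto\varphi^{-1}$ from \cite{Hofer:2021a,Frauenfelder:2021b}, but neither statement is available there in the generality you need. What those references contain is the model case of the \emph{translation} action of $\R$ (or $\SS^1$) on loops; the action of the full Sobolev diffeomorphism group $\Dd$ with its scale structure is a genuinely stronger statement, which the paper obtains from a specific recent result of Neumeister \cite[Prop.\,3.2]{Neumeister:2021a} (Theorem~\ref{thm:Neumeister}). The sc-smoothness of inversion on $\Dd$ is not in the literature cited at all: it is the main technical content of this paper (Proposition~\ref{prop:inverse}), proved by computing the explicit formulas for $D^nI|_\psi$, observing that each is a polynomial in terms of the form $\psi^{(j)}\circ\psi^{-1}$, $\hat\psi_i^{(j)}\circ\psi^{-1}$ and $1/(\psi'\circ\psi^{-1})$, and verifying continuity of the $n$-th differential with the appropriate level shifts via the criterion of \cite[Le.\,4.8]{Frauenfelder:2021b}. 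So while your architecture and your diagnosis of where the difficulty sits are both right, the proof as written defers its two hardest steps to references that do not contain them; at minimum the inversion step needs an actual argument along the lines of Proposition~\ref{prop:inverse}, and the reparametrization step needs the correct citation.
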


%%%%%%%%%%%%%%%%%%%%%%%%%%%%%%%%%%%
%%%%%%% Subsection  %%%%%%%%%%%%%%%%%%%
%%%%%%%%%%%%%%%%%%%%%%%%%%%%%%%%%%%
\subsection*{Scale smoothness}

%%%%%%%%%%%%%%%%%%%%%%%%%%%%%%%%%%%
%%%%%%% Subsection  %%%%%%%%%%%%%%%%%%%
%%%%%%%%%%%%%%%%%%%%%%%%%%%%%%%%%%%
\subsubsection*{Neumeisters theorem}

The proof of Theorem~\ref{thm:A} is based on a result of
Neumeister which tells that the action on the free loop space of
the diffeomorphism group of the circle
$$
   \Dd
   :=\{\psi\in W^{2,2}(\SS^1,\SS^1)\mid
   \text{$\psi$ is bijective and $\psi^{-1}\in W^{2,2}(\SS^1,\SS^1)$}
   \}
$$%
with levels $\Dd_k:=\Dd\cap W^{2,2+k}(\SS^1,\SS^1)$,
for $k\in\N_0$, is scale smooth.

\begin{theorem}[\hspace{-.1pt}{\cite[Prop.\,3.2]{Neumeister:2021a}}]
\label{thm:Neumeister}
The reparametrization map
$$
   \rho\colon \Dd\times \Lambda\C\to\Lambda\C
   ,\quad
   (\psi,z)\mapsto z\circ \psi
$$
is scale smooth.
\end{theorem}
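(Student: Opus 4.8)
The plan is to verify the defining axioms of scale smoothness directly, exploiting the fact that the reparametrization map is assembled from three elementary operations --- composition, pointwise multiplication, and differentiation --- each of which is either manifestly scale smooth or becomes so once the characteristic loss of one derivative is absorbed by the shift in the scale. Recall that a map between scale Hilbert spaces is $\SC^1$ if it is $\SC^0$ (continuous level by level into the same level), admits at every point of the once-shifted scale a bounded linearization on the base level, and has an $\SC^0$ tangent map; $\SC^\infty$ then follows inductively. I would first record the two algebraic inputs: that $W^{2+k,2}(\SS^1,\C)$ is a Banach algebra for every $k\in\N$ (so pointwise multiplication is bounded bilinear, hence $\SC^\infty$), and that differentiation $\partial\colon z\mapsto z'$ is bounded linear $\Lambda\C_{k+1}\to\Lambda\C_k$, i.e.\ a scale-smooth morphism from the once-shifted scale $\Lambda\C^1$ into $\Lambda\C$ (bounded linear level-preserving maps being the basic scale-smooth examples).

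Next I would establish that $\rho$ is $\SC^0$. Since $W^{2,2}(\SS^1)\hookrightarrow C^1$, every $\psi\in\Dd$ is a $C^1$-diffeomorphism with $\psi'$ bounded away from zero; at level $k$ one has $\psi\in W^{2+k,2}(\SS^1,\SS^1)\hookrightarrow C^{1+k}$. Applying the Fa\`a di Bruno formula writes $\partial^j(z\circ\psi)$ as a finite sum of terms $(z^{(i)}\circ\psi)\prod_\ell\psi^{(m_\ell)}$ with $i\le j$. For $j\le 2+k$ the worst summand is $(z^{(2+k)}\circ\psi)(\psi')^{2+k}$, in which $z^{(2+k)}\in L^2$; the change-of-variables estimate $\Norm{w\circ\psi}_{L^2}^2\le\Norm{1/\psi'}_{\infty}\Norm{w}_{L^2}^2$ together with the algebra property controls every summand in $L^2$, giving $z\circ\psi\in W^{2+k,2}$ and a bound on its norm. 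Joint continuity on each level then follows by approximating $z$ in $W^{2+k,2}$ by smooth loops and combining the uniform bound furnished by $\Norm{1/\psi'}_\infty$ (locally uniform in $\psi$) with continuity of $L^2$-composition in $\psi$.

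With $\SC^0$ in hand I would write down the candidate linearization, obtained by differentiating $z\circ\psi$ in both arguments,
$$
   D\rho(\psi,z)(\hat\psi,\hat z)
   =\hat z\circ\psi+(z'\circ\psi)\,\hat\psi ,
$$
where $\hat\psi\in W^{2,2}(\SS^1,\R)$ is a tangent vector to $\Dd$ under the trivialization $T\SS^1\cong\SS^1\times\R$. The decisive observation is that this expression is again built from the same three operations: $\hat z\circ\psi=\rho(\psi,\hat z)$, while $z'\circ\psi=\rho(\psi,\partial z)$ involves the once-differentiated loop and so costs exactly one level. Because $\SC^1$ only requires $D\rho(\psi,z)$ to be bounded for $(\psi,z)$ on the shifted scale --- so that $z\in\Lambda\C_1$ yields $z'\in\Lambda\C_0$ --- this loss is harmless, and $D\rho(\psi,z)\colon T_\psi\Dd_0\oplus\Lambda\C_0\to\Lambda\C_0$ is bounded by the $\SC^0$ estimate above and the algebra property. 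The tangent map $T\rho$ is thus a composite of $\rho$, $\partial$, and multiplication; applying $T$ once more and using the Leibniz and chain rules keeps every iterate $T^j\rho$ inside the class of maps generated by $\{\rho,\partial,\text{multiplication}\}$. Since $\partial$ and multiplication are $\SC^\infty$ and $\rho$ is $\SC^0$ with $\SC^0$ tangent map, induction on $j$ yields that $\rho$ is $\SC^\infty$.

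The main obstacle is the top-level continuity in the $\SC^0$ step: the highest-order summand $(z^{(2+k)}\circ\psi)(\psi')^{2+k}$ contains $z^{(2+k)}$, which lies only in $L^2$, so one must prove that $\psi\mapsto z^{(2+k)}\circ\psi$ is continuous into $L^2$ jointly with variation of $z$. Here neither the Sobolev embedding nor the algebra property helps directly, and the argument must proceed by density: approximate $z^{(2+k)}$ by continuous functions, for which composition is continuous by dominated convergence, and control the error uniformly through $\Norm{1/\psi'}_\infty$. Verifying that this control is genuinely locally uniform as $\psi$ varies --- equivalently, that $\psi\mapsto\psi'$ is continuous into $C^0$ and bounded below locally uniformly --- is the delicate point on which the whole scale smoothness rests.
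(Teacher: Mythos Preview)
The paper does not prove this statement at all: Theorem~\ref{thm:Neumeister} is quoted verbatim from Neumeister's work \cite[Prop.\,3.2]{Neumeister:2021a} and used as a black box. The only commentary the paper offers is the subsequent remark explaining why the base level must be $W^{2,2}$ rather than $W^{1,2}$, which is not a proof but a motivation for the choice of scale. So there is no in-paper argument to compare your proposal against.

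As a standalone sketch your strategy is the standard one and is essentially what one expects Neumeister's proof to do: establish level-wise continuity via Fa\`a di Bruno plus change of variables, write the derivative as $D\rho(\psi,z)(\hat\psi,\hat z)=\hat z\circ\psi+(z'\circ\psi)\hat\psi$, observe that it is again assembled from $\rho$, $\partial$, and multiplication, and iterate. One point you should tighten is the induction. You write that $\partial$ is $\SC^\infty$ and then conclude that every $T^j\rho$ is $\SC^0$ because it is a composite of $\SC^0$ and $\SC^\infty$ maps; but $\partial$ is only $\SC^\infty$ as a map $\Lambda\C^1\to\Lambda\C$ (shifted source), not as an endomorphism of $\Lambda\C$, so each appearance of $\partial$ forces a shift in the domain scale. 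The scale-calculus definition of $\SC^k$ absorbs exactly these shifts, but the bookkeeping has to be made explicit---you cannot simply invoke ``composite of $\SC^\infty$ maps is $\SC^\infty$'' when the domains and codomains do not line up before shifting. A clean way to organize this is to verify directly, for each $n$, that $D^n\rho$ (a polynomial in $\psi^{(j)}$, $z^{(j)}\circ\psi$, $\hat\psi_i^{(j)}\circ\psi$) is continuous from the $n$-times shifted source into the base level, which is the criterion the paper itself uses in its proof of Proposition~\ref{prop:inverse}.
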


\begin{remark}[Why the zero level is chosen $W^{2,2}$ and not $W^{1,2}$]
In case $(\psi,z)\in W^{1,2}(\SS^1,\SS^1)\times W^{1,2}(\SS^1,\C)$,
the derivative
$$
    (z\circ \psi)^\prime
   =\underbrace{z^\prime|_\psi}_{L^2}\cdot \underbrace{\psi^\prime}_{L^2}
$$
is not necessarily in $L^2$. But in case $(\psi,z)\in W^{2,2}\times W^{2,2}$
the derivative lies in $W^{1,2}$ since both factor do and on one of them
we can use that $W^{1,2}\subset C^0$.
Then the second weak derivative exists as well
$$
    (z\circ \psi)^{\prime\prime}
   =\underbrace{z^{\prime\prime}|_\psi}_{L^2}
   \cdot\underbrace{\psi^\prime}_{W^{1,2}\subset C^0}
   \cdot\underbrace{\psi^\prime}_{W^{1,2}\subset C^0}
   +\underbrace{z^\prime|_\psi}_{W^{1,2}\subset C^0}
   \cdot \underbrace{\psi ^{\prime\prime}}_{L^2}
$$
and lies in $L^2$ as desired.
\end{remark}

%%%%%%%%%%%%%%%%%%%%%%%%%%%%%%%%%%%
%%%%%%% Subsubsection  %%%%%%%%%%%%%%%%
%%%%%%%%%%%%%%%%%%%%%%%%%%%%%%%%%%%
\subsubsection*{Time rescaling}

\begin{lemma}\label{le:t_z}
The map
$$
   t\colon\Lambda\C^\times\to\Dd,\quad
   z\mapsto t_z
$$
is scale smooth.
\end{lemma}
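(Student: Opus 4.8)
The plan is to reduce the scale smoothness of $z \mapsto t_z$ to the scale smoothness of a handful of elementary operations on the scale Hilbert space $\Lambda\C$ and on $W^{k,2}$-type Sobolev spaces of loops, composed in a way that keeps track of the shift in Sobolev index. First I would observe that $t_z$ is, up to the scalar normalization $1/\norm{z}_{L^2}^2$, the antiderivative (based at $0$) of $\abs{z}^2$. Concretely, write $F(z)(\tau) := \int_0^\tau \abs{z(s)}^2\,ds$ and note $t_z = F(z)/F(z)(1)$, with $F(z)(1) = \norm{z}_{L^2}^2$. So the map factors as $z \mapsto \abs{z}^2 \mapsto (F(z), F(z)(1)) \mapsto t_z$, and I would check scale smoothness of each arrow. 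The pointwise squaring $z \mapsto \abs{z}^2 = z\bar z$ is a continuous bilinear-type (sesquilinear, but $\R$-bilinear after identifying $\C \cong \R^2$) map; on the scales $W^{2+k,2} \times W^{2+k,2} \to W^{2+k,2}$ it is bounded because $W^{2+k,2}(\SS^1) \hookrightarrow C^{1+k}$ is a Banach algebra-type multiplication estimate holds, and a continuous $\R$-bilinear map between scale Hilbert spaces is automatically $\SC^\infty$ (its iterated scale-differentials are itself and $0$). Hence $z \mapsto \abs{z}^2$ is scale smooth as a map $\Lambda\C^\times \to \Lambda\R$ (indeed $\Lambda\C$).

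Next, integration: the antiderivative operator $A\colon g \mapsto [\tau \mapsto \int_0^\tau g(s)\,ds]$ is a bounded \emph{linear} operator $W^{1+k,2}(\SS^1) \to W^{2+k,2}(\SS^1)$ for every $k \ge 0$ — it gains one derivative — so in particular it restricts to a bounded linear, hence $\SC^\infty$, operator on the scale $\Lambda\C$ (where the $k$-th level is $W^{2+k,2}$, and the input $\abs{z}^2$ lives there too, so $A(\abs{z}^2) \in W^{3+k,2}$, one level better than needed). Similarly, evaluation at a point $g \mapsto \int_0^1 g$, or equivalently $z \mapsto \norm{z}_{L^2}^2$, is a composition of the $\R$-bilinear squaring with the bounded linear functional of integration, hence scale smooth, and it is strictly positive on $\Lambda\C^\times$ since $z$ is nowhere zero. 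Division by a nowhere-vanishing scalar function of $z$ that never hits $0$ — here the \emph{constant} (in $\tau$) positive real number $\norm{z}_{L^2}^2$ — is scale smooth because $x \mapsto 1/x$ is a smooth map $(0,\infty) \to \R$ and scalar multiplication $\R \times \Lambda\C \to \Lambda\C$ is continuous bilinear; so $z \mapsto t_z = \tfrac{1}{\norm z_{L^2}^2} A(\abs z^2)$ is scale smooth as a map into $\Lambda\C$, in fact landing in $W^{3+k,2}$ at level $k$.

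The remaining point — and the one I expect to be the only genuine obstacle — is to verify that this scale-smooth map $\Lambda\C^\times \to \Lambda\C$ actually takes values in the submanifold $\Dd \subset W^{2,2}(\SS^1,\SS^1)$ and is scale smooth \emph{as a map into $\Dd$ with its own scale structure $\Dd_k = \Dd \cap W^{2,2+k}$}. That $t_z$ is a circle diffeomorphism of class $W^{2,2}$ (indeed better) with $W^{2,2}$ inverse is already contained in the setup discussion in the excerpt: $t_z \in \Diff(\SS^1)$ with $t_z'(\tau) = \abs{z(\tau)}^2/\norm z_{L^2}^2 > 0$ bounded away from $0$ on the compact circle, so $t_z$ is a $W^{3,2}$-regular diffeomorphism and $\tau_z = t_z^{-1}$ is as well (inversion of a $C^1$-diffeo with nonvanishing derivative, composed with the smoothing from the inverse function theorem, stays in the same Sobolev class). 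For scale smoothness \emph{into} $\Dd$ one invokes that $\Dd$ is an open subset of an affine subspace of the scale Banach space $W^{2,2+\bullet}(\SS^1,\SS^1)$ in a chart (lifting loops $\SS^1 \to \SS^1$ of degree one to $\R$-valued maps $\tau \mapsto t_z(\tau) - \tau$, which is genuinely $\Z$-periodic and lies in $W^{2,2+\bullet}(\SS^1,\R)$), and in that chart the map is exactly $z \mapsto \big(\tau \mapsto \tfrac{1}{\norm z_{L^2}^2}\int_0^\tau \abs{z(s)}^2 ds - \tau\big)$, which is the scale-smooth map built above shifted by the fixed element $-\mathrm{id}$. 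The openness of the image inside $\Dd$ is automatic since every $t_z$ has positive derivative. This completes the plan; the routine part is only the chart bookkeeping for $\Dd$ and the standard Sobolev multiplication/integration estimates, while the conceptual content is the factorization $t_z = (\text{scalar})\cdot(\text{antiderivative of }\abs z^2)$ together with the fact that continuous multilinear and bounded linear maps of scale Hilbert spaces are automatically $\SC^\infty$.
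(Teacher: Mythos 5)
Your proposal is correct and follows essentially the same route as the paper: both factor $t_z=\tfrac{1}{\norm{z}_{L^2}^2}\int_0^\tau\abs{z}^2$ through the squaring/integration map, the $L^2$-norm functional, the reciprocal on $(0,\infty)$, and scalar multiplication. The only cosmetic difference is that the paper justifies each factor by observing it is smooth on every fixed level (strong scale smoothness, which implies $\SC^\infty$), whereas you invoke the $\SC^\infty$-ness of continuous linear and bilinear level-preserving maps plus the chain rule, and you additionally spell out the chart bookkeeping for $\Dd$, which the paper leaves implicit.
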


\begin{proof}
We show that the map $t$ is \textbf{strongly scale smooth}
($\SSC^\infty$).
By definition, this means that the map $t$ is on each level $k\in\N_0$
smooth as a map $\Lambda\C_k^\times\to\Dd_k$.
But strongly scale smooth implies scale smooth ($\SC^\infty$).
\\
To this end we decompose the map $t$ as the
composition $t=\Mm\circ(\Ii,\iota\circ\Nn)$
of several maps each of which is obviously smooth.
These maps are
$$
   \Nn\colon \Lambda\C_k^\times\to(0,\infty)
   ,\quad
   z\mapsto\norm{z}_{L^2}^2
   \qquad
   \iota\colon(0,\infty)\to(0,\infty)
   ,\quad
   x\mapsto \tfrac{1}{x}
$$
and
$$
   \Ii\colon \Lambda\C_k^\times\to W^{k,2}([0,1],\R)
   ,\quad
   z\mapsto
   \left[\tau\mapsto \int_0^\tau\abs{z(s)}^2\, ds\right]
$$
and
$$
   \Mm\colon W^{k,2}([0,1],\R)\times\R\to W^{k,2}([0,1],\R)
   ,\quad
   (v,r)\mapsto rv .
$$
This proves Lemma~\ref{le:t_z}.
\end{proof}

%%%%%%%%%%%%%%%%%%%%%%%%%%%%%%%%%%%
%%%%%%% Subsubsection  %%%%%%%%%%%%%%%%
%%%%%%%%%%%%%%%%%%%%%%%%%%%%%%%%%%%
\subsubsection*{Inverse}

\begin{proposition}\label{prop:inverse}
The map $I\colon \Dd\to\Dd$, $\psi\mapsto \psi^{-1}$, is scale smooth.
\end{proposition}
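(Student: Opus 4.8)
The plan is to deduce Proposition~\ref{prop:inverse} from Neumeister's Theorem~\ref{thm:Neumeister} by means of the implicit function theorem of scale calculus, after localizing the problem at the identity $\id\in\Dd$. Since scale smoothness is a local property, it suffices to prove that $I$ is scale smooth near each $\psi_0\in\Dd$. I would first note that, for every fixed $\eta\in\Dd$, the left and right translations
$$
   L_\eta\colon\Dd\to\Dd,\ \psi\mapsto\eta\circ\psi,
   \qquad
   R_\eta\colon\Dd\to\Dd,\ \psi\mapsto\psi\circ\eta,
$$
are scale smooth: in the lift charts $\psi\leftrightarrow\tilde\psi-\id\in W^{2,2}(\SS^1,\R)$ underlying the scale manifold structure of $\Dd$ these maps become, up to fixed affine shifts, of the form $u\mapsto\rho(\psi_u,\tilde\eta-\id)$ respectively $u\mapsto\rho(\eta,u)$ (composing with the fixed reflection $t\mapsto-t$ if $\eta$ is orientation reversing), hence are scale smooth by Theorem~\ref{thm:Neumeister}, using that constant maps into $\Dd$ are scale smooth. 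In particular $L_{\psi_0}$ is a scale diffeomorphism with inverse $L_{\psi_0^{-1}}$ and $L_{\psi_0}(\id)=\psi_0$, so the group identity $I\circ L_{\psi_0}=R_{\psi_0^{-1}}\circ I$ reduces scale smoothness of $I$ near $\psi_0$ to scale smoothness of $I$ near $\id$.

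Near $\id$, a diffeomorphism $\psi$ which is $C^1$-close to $\id$ corresponds to $u:=\tilde\psi-\id\in W^{2,2}(\SS^1,\R)$ with $1+u'>0$, and its inverse to the function $v=v(u)\in W^{2,2}(\SS^1,\R)$ solving $(\id+u)\circ(\id+v)=\id$, that is,
$$
   G(u,v):=v+u\circ(\id+v)=0 .
$$
Since $u\circ(\id+v)=\rho(\id+v,u)$, with $u$ regarded in the closed real subscale of $\Lambda\C$, and since $v\mapsto\id+v$ is the chart parametrization of a neighborhood of $\id$ in $\Dd$, Theorem~\ref{thm:Neumeister} shows that $G$ is scale smooth on a neighborhood of $(0,0)$ in $W^{2,2}(\SS^1,\R)\times W^{2,2}(\SS^1,\R)$.

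Now $G(0,0)=0$, and since $G(0,v)=v$ identically, the partial derivative $D_2G(0,0)$ is the identity and so is a linear scale isomorphism. The scale-smooth implicit function theorem (see~\cite{Hofer:2021a}) then provides a neighborhood $V$ of $0$ and a scale smooth map $\iota\colon V\to W^{2,2}(\SS^1,\R)$ with $\iota(0)=0$, with $1+\iota(u)'>0$ on $V$, and with $\iota(u)$ the unique small solution of $G(u,\iota(u))=0$. Uniqueness of the inverse diffeomorphism forces $v(u)=\iota(u)$, so the chart representative $u\mapsto v(u)$ of $I$ near $\id$ is scale smooth, which completes the localization and hence the proof.

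The main point is to arrange the hypotheses so that the implicit function theorem applies, and this is where the argument has to be a little careful. One cannot invoke it at a general base point $(u_0,v_0)$, since there $D_2G(u_0,v_0)$ is multiplication by $1+u_0'\circ(\id+v_0)=1/(1+v_0')$, an operator that preserves the Sobolev level $W^{2,2}$ only when $v_0$ has one more derivative than the ambient level provides --- precisely the loss of derivatives that prevents composition and inversion from being smooth on any single Sobolev level. Localizing at $\id$, where $D_2G(0,0)=\Id$, removes this difficulty, and the translation identity then spreads scale smoothness over all of $\Dd$; the remaining input, that $G$ is scale smooth, is exactly Neumeister's theorem together with the routine scale smoothness of the lift charts and of constant maps.
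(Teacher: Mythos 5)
Your reduction to the implicit function theorem contains a genuine gap at its central step: in scale calculus there is \emph{no} general implicit function theorem for sc-smooth maps whose partial sc-differential is an sc-isomorphism. The contraction argument behind the Banach IFT breaks down because sc-differentiability only gives continuity of $(u,v,\hat v)\mapsto D_2G(u,v)\hat v$, not continuity of $(u,v)\mapsto D_2G(u,v)$ in the operator norm, so invertibility of $D_2G(0,0)=\Id$ does not propagate to a uniform contraction near $(0,0)$. The implicit function theorem in~\cite{Hofer:2021a} applies to sc-Fredholm germs, which by definition admit (after coordinate change) a contraction-germ normal form $v\mapsto v-B(u,v)$ with $\norm{B(u,v_1)-B(u,v_2)}_m\le\eps\norm{v_1-v_2}_m$ on \emph{every} level $m$. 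For your $G(u,v)=v+u\circ(\id+v)$ this fails already on level $0$: take $v_1=0$ and $v_2=\delta$ a small constant, so $\norm{v_1-v_2}_{W^{2,2}}=\delta$, while $\norm{u''(\cdot+\delta)-u''}_{L^2}/\delta\to\infty$ as $\delta\to0$ whenever $u''\in L^2\setminus W^{1,2}$; hence $v\mapsto u\circ(\id+v)$ is not even Lipschitz on $W^{2,2}$ for generic $u\in W^{2,2}$, let alone a contraction. Producing a normal form is precisely the hard analytic content, and it is not supplied; the loss of derivative you correctly identify at a general base point does not disappear at $\id$, it merely moves from the linearization into the remainder.

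The localization step has a second, independent problem: the translations $L_{\psi_0}$ and $R_{\psi_0^{-1}}$ map $\Dd_k$ to $\Dd_k$ only if $\psi_0$ itself lies in $\Dd_k$, so for a base point $\psi_0$ of finite regularity (say $\psi_0\in\Dd_0\setminus\Dd_1$) these translations are not even $\SCz$, and the identity $I\circ L_{\psi_0}=R_{\psi_0^{-1}}\circ I$ transports scale smoothness only to base points on the smooth level $\cap_k\Dd_k$. This is the standard feature of sc- (and ILH-) Lie groups: multiplication is sc-smooth jointly, but translation by a fixed non-smooth element does not act on the scale structure. The paper avoids both issues by working directly: it differentiates $\psi\circ I(\psi)=\id$ to obtain an explicit formula for $DI|_\psi$, observes that $D^nI|_\psi(\hat\psi_1,\dots,\hat\psi_n)$ is a polynomial in $1/(\psi^\prime\circ\psi^{-1})$ and the compositions $\psi^{(j)}\circ\psi^{-1}$, $\hat\psi_i^{(j)}\circ\psi^{-1}$, and then verifies the level-shifted continuity criterion of~\cite[Le.\,4.8]{Frauenfelder:2021b} for each $n$; no linearized operator ever needs to be inverted.
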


\begin{proof}
We compute the scale differentials of the inversion map $I$.
By definition of the inverse, for every $\psi\in\Dd$ we have the
identity $\psi\circ I(\psi)=\id$.
Differentiating this identity we obtain for a tangent vector
$\hat\psi\in T_\psi\Dd=W^{2,2}(\SS^1,\R)$ that
$$
   0=\hat\psi\circ I(\psi)+d\psi|_{I(\psi)} DI|_\psi \hat \psi
   =\hat\psi\circ\psi^{-1}+(\psi^\prime\circ\psi^{-1})\cdot DI|_\psi\hat \psi.
$$
Therefore we obtain the formula
$$
   DI|_\psi\hat \psi
   =\frac{-1}{\psi^\prime\circ \psi^{-1}} \cdot \hat\psi\circ\psi^{-1}.
$$
Note that $\psi^\prime(t)\not=0$, for every $t$,
since $\psi\colon\SS^1\to\SS^1$ is a diffeomorphism.

Let $\psi\in\Dd$ and $\hat\psi_1,\hat\psi_2\in T_\psi\Dd$.
Note that $\psi$ appears three times in the formula for $DI|_\psi\hat \psi$.
Hence the second derivative is a sum of three terms, namely
\begin{equation*}
\begin{split}
   D^2I|_\psi(\hat \psi_1,\hat\psi_2)
   &=\frac{1}{(\psi^\prime\circ \psi^{-1})^2}
   (\hat\psi^\prime_2\circ\psi^{-1})(\hat\psi_1\circ\psi^{-1})
   \\
   &\quad
   -\frac{1}{(\psi^\prime\circ \psi^{-1})^3} (\psi^{\prime\prime}\circ\psi^{-1})
   (\hat\psi_2\circ\psi^{-1}) (\hat\psi_1\circ\psi^{-1})
   \\
   &\quad
   +\frac{1}{(\psi^\prime\circ \psi^{-1})^2}
   (\hat\psi^\prime_1\circ\psi^{-1})(\hat\psi_2\circ\psi^{-1}) .
\end{split}
\end{equation*}
Note that $D^2I|_\psi(\hat \psi_1,\hat\psi_2)$ is a polynomial in
the six variables
$$
   \frac{1}{\psi^\prime\circ\psi^{-1}},\quad
   \psi^{\prime\prime}\circ\psi^{-1},\quad
   \hat\psi_1\circ\psi^{-1},\quad
   \hat\psi_2\circ\psi^{-1},\quad
   \hat\psi_1^{\prime}\circ\psi^{-1},\quad
   \hat\psi_2^{\prime\prime}\circ\psi^{-1}.
$$
If $\psi$ is in $W^{k+4,2}$ and $\hat\psi_1,\hat\psi_2$ are in $W^{k+3,2}$,
then all these variables are in $W^{k+2,2}$.
Since multiplication $W^{k+2,2}\times W^{k+2,2}\to W^{k+2,2}$ is continuous,
we conclude that the map
$$
   \Dd^{k+2}\times W^{k+3,2}\times W^{k+3,2}\to W^{k+2,2}
   ,\quad
   (\psi,\hat \psi_1,\hat\psi_2)\mapsto D^2I|_\psi(\hat \psi_1,\hat\psi_2)
$$
is continuous.
Therefore, by the criterium in~\cite[Le.\,4.8]{Frauenfelder:2021b}
the inversion map $I$ is of class $\SC^2$.

Differentiating further by induction we obtain that for every $n\in\N$
$D^nI|_\psi(\hat\psi_1,\dots,\hat\psi_n)$
is a polynomial in the $(n+1)n$ variables
\begin{equation*}
\begin{split}
   &\frac{1}{\psi^\prime\circ\psi^{-1}},\quad
   \psi^{\prime\prime}\circ\psi^{-1},\dots, \psi^{(n)}\circ\psi^{-1} ,
%   ,\quad
\\
   &\hat\psi_1\circ\psi^{-1},\dots, \hat\psi_1^{(n-1)}\circ\psi^{-1}
   ,\quad\dots,\quad
   \hat\psi_n\circ\psi^{-1},\dots, \hat\psi_n^{(n-1)}\circ\psi^{-1} .
\end{split}
\end{equation*}
Hence the map
\begin{equation*}
\begin{split}
   \Dd^{k+m}\times
   W^{k+m+1,2}\times \dots\times W^{k+m+1,2}&\to W^{k+2,2}
\\
   (\psi,\hat \psi_1,\dots,\hat\psi_m)
   &\mapsto D^mI|_\psi(\hat \psi_1,\dots,\hat\psi_m)
\end{split}
\end{equation*}
is continuous.
Therefore the map $I$ is $\SC^n$ for every $n\in\N$,
hence $\SC^\infty$.
This finishes the proof of Proposition~\ref{prop:inverse}.
\end{proof}

\begin{remark}
Together with Neumeisters Theorem~\ref{thm:Neumeister},
Proposition~\ref{prop:inverse} shows that the diffeomorphism group of
the circle is a scale Lie group.
\end{remark}

%%%%%%%%%%%%%%%%%%%%%%%%%%%%%%%%%%%
%%%%%%% Subsection  %%%%%%%%%%%%%%%%%%%
%%%%%%%%%%%%%%%%%%%%%%%%%%%%%%%%%%%
\subsubsection*{Proof of main result}

\begin{proof}[Proof of Theorem~\ref{thm:A}]
The map $\Rr$ can be written as the composition
$\Rr(z)=\rho(\sigma(z),I\circ t(z))$
of scale smooth maps and is therefore itself scale smooth
by the scale calculus chain rule~\cite[Thm.\,1.3.1]{Hofer:2021a}.

We abbreviate by $\sigma\colon \Lambda\C^\times\to\Lambda\C^\times$
the squaring map $z\mapsto z^2$.
The squaring map is obviously smooth on every level, hence
$\SSC^\infty$, thus $\SC^\infty$.
The map $I$ is $\SC^\infty$ by Proposition~\ref{prop:inverse}.
The map $t$ is $\SC^\infty$ by Lemma~\ref{le:t_z}.
The map $\rho$ is $\SC^\infty$ by Theorem~\ref{thm:Neumeister}.
This concludes the proof of Theorem~\ref{thm:A}.
\end{proof}

%\newpage
%%%%%%%%%%%%%%%%%%%%%%%%%
%%%%%%%%% REFERENCES %%%%%%
%%%%%%%%%%%%%%%%%%%%%%%%
%\renewcommand{\bibname}{References}
%\bibliographystyle{plain}
         %   erzeugt:     [1] Joa Weber
%\bibliographystyle{abbrv}
         %  erzeugt:      [1] J. Weber and 
\bibliographystyle{alpha}
         %  article:    [Web05]  J. Weber
         %  book:      [Web05]  Joa Weber
         % more authors: [HZ87]
%%%%%%%%%%%%%%%%%%%%%%%%%
%% include Bibliography in TOC %%
% en.wikibooks.org/wiki/LaTeX/Bibliography_Management#Using_tocbibind
%%%%%%%%%%%%%%%%%%%%%%%%%
% Using hyperref, one should say:
%\cleardoublepage
%\phantomsection
\addcontentsline{toc}{section}{References}
\bibliography{$HOME/Dropbox/0-Libraries+app-data/Bibdesk-BibFiles/library_math,$HOME/Dropbox/0-Libraries+app-data/Bibdesk-BibFiles/library_math_2020,$HOME/Dropbox/0-Libraries+app-data/Bibdesk-BibFiles/library_physics}{}

\begin{thebibliography}{{Neu}21}

\bibitem[BOV21]{Barutello:2021b}
Vivina Barutello, Rafael Ortega, and Gianmaria Verzini.
\newblock Regularized variational principles for the perturbed {K}epler
  problem.
\newblock {\em Adv. Math.}, 383:Paper No. 107694, 64, 2021.
\newblock \href{https://arxiv.org/abs/2003.09383}{arXiv:2003.09383}.

\bibitem[FW21a]{Frauenfelder:2021e}
Urs {Frauenfelder} and Joa {Weber}.
\newblock {The regularized free fall I -- Index computations}.
\newblock {\em Russian Journal of Mathematical Physics}, 28(4):464--487, 2021.
\newblock \href{https://rdcu.be/cCJqj}{SharedIt}.

\bibitem[FW21b]{Frauenfelder:2021b}
Urs {Frauenfelder} and Joa Weber.
\newblock {The shift map on Floer trajectory spaces}.
\newblock {\em J. Symplectic Geom.}, 19(2):351--397, 2021.
\newblock \href{https://arxiv.org/abs/1803.03826}{arXiv:1803.03826}.

\bibitem[HWZ21]{Hofer:2021a}
Helmut Hofer, Krzysztof Wysocki, and Eduard Zehnder.
\newblock {\em Polyfold and {F}redholm theory}, volume~72 of {\em Ergebnisse
  der Mathematik und ihrer Grenzgebiete. 3. Folge. A Series of Modern Surveys
  in Mathematics}.
\newblock Springer, Cham, 2021.
\newblock \href{https://arxiv.org/abs/1707.08941}{Preliminary version on
  arXiv:1707.08941}.

\bibitem[{Neu}21]{Neumeister:2021a}
Oliver {Neumeister}.
\newblock {The curve shrinking flow, compactness and its relation to scale
  manifolds}.
\newblock {\em arXiv e-prints}, 2021.
\newblock \href{https://arxiv.org/abs/2104.12906}{arXiv:2104.12906}.

\bibitem[Web19]{Weber:2019a}
Joa Weber.
\newblock {\em Scale Calculus and M-Polyfolds -- An Introduction}.
\newblock Publica\c{c}\~oes Matem\'aticas do IMPA. [IMPA Mathematical
  Publications]. Instituto Nacional de Matem\'atica Pura e Aplicada (IMPA), Rio
  de Janeiro, 2019.
\newblock 32${^{\rm{o}}}$ Col\'oquio Brasileiro de Matem\'atica.
  \href{https://impa.br/wp-content/uploads/2022/03/32CBM03_eBook.pdf}{Access
  pdf}. Extended version in preparation.

\end{thebibliography}
%$
%%%%%%%%%%%%%%%%%%%%%%%%%
%%%%%%%%% standard %%%%%%%%%
%%%%%%%%%%%%%%%%%%%%%%%%%
%\begin{thebibliography}{00000}
%\small
%\end{thebibliography}

%%%%%%%%%%%%%%%%%%%%%%%%%%%%%%%%%%%%
%%%%%%%%%%%%% GLOSSARY %%%%%%%%%%%%%%%
%%%%%%%%%%%%%%%%%%%%%%%%%%%%%%%%%%%%
% Using hyperref, one should say:
%\cleardoublepage
%\phantomsection
%\printnomenclature
%
%This is $F$\label{nomen:F} 
%\nomenclature[EF]{$F$}{Objective function}{}{\pageref{nomen:F}}
%\clearpage

%%%%%%%%%%%%%%%%%%%%%%%%%
%%%%%%%%% INDEX %%%%%%%%%%
%%%%%%%%%%%%%%%%%%%%%%%%%
% Using hyperref, one should say:
%\cleardoublepage
%\phantomsection
%\addcontentsline{toc}{chapter}{Index}
%\printindex

\end{document}